\tikzstyle arrowstyle=[scale=1]
\tikzstyle directed=[postaction={decorate,decoration={markings,
    mark=at position .65 with {\arrow[arrowstyle]{stealth}}}}]
\tikzstyle reverse directed=[postaction={decorate,decoration={markings,
    mark=at position .65 with {\arrowreversed[arrowstyle]{stealth};}}}]
\providecommand{\U}[1]{\protect\rule{.1in}{.1in}}
\newtheorem{theorem}{Theorem}
\newtheorem{lemma}[theorem]{Lemma}
\newtheorem{prop}[theorem]{Proposition}
\begin{document}
  \begin{center}
        {\fontsize{18}{22}\selectfont
       \bf Choreography solutions of the $n$-body problem on $S^2$}
       \end{center}
       
       \vspace{4mm}

        \begin{center}
        {\bf Juan Manuel S\'anchez-Cerritos$^1$ and Shiqing Zhang$^1$}\\
\bigskip
$^1$Department of Mathematics\\
Sichuan University, Chengdu, People's Republic of China\\
\bigskip
sanchezj01@gmail.com, zhangshiqing@msn.com
       \end{center}

\abstract{We try to prove the existence of choreography solutions for the $n-$body problem on $S^2$}. For the three-body problem, we show the existence of the 8-shape orbit on $S^2$.\\

\noindent {\bf Key words:} celestial mechanics, curved n-body problem, periodic solutions, choreographies.\\

1991 \textit{Mathematics Subject Classification} Primary 70F10, Secondary 70H12

\section{Introduction}

The curved $n-$body problem is a generalization of the Newtonian gravitational problem. It has been studied for several authors, for example in \cite{Diacu4,Diacu5,Diacu6,Diacu3,Diacu,Diacu2,paper1,naranjo,paper,Zhu}. Particularly the interesting history of this problem can be found on \cite{Diacu4, Diacu5}. Here we consider the positive curvature case, i.e. particles moving on the unit sphere, $S^2$.

The motion of the $n$ particles with masses $m_i>0$ and positions $q_i \in S^2$, $i=1\dots,n$,  is described by the following system

\begin{equation}
 m_i\ddot{q}_i=\frac{\partial U}{\partial q_i}-m_i(\dot{q}_i\cdot \dot{q_i})q_i, \ \ \ i=1\dots,n, \label{eq}
\end{equation}
where $U$ is the force function which generalizes the Newtonian one, and it is given by

\begin{equation}
U=\sum_{i<j}m_i m_j\cot(d(q_i,q_j)). 
\end{equation}

On classical n-body problems, Chenciner and Montgomery proved the existence of the eight-shape choreography for the three body problem in 2000 \cite{chenciner}, which was described numerically by Moore in 1993 \cite{moore} and Sim\'o in 2000 \cite{Simo}. In the recent years Montenelli and Gushterov computed numerically the analogue solutions in the positive curved space \cite{montanelli}.

The goal of this work is, based on the work of Zhang and co-authors \cite{Zhang1, Zhang2, Zhang3}, to prove the existence of periodic solutions for the $n-$body problem on $S^2$.



Motivated by Sim\'o \cite{Simo} for planar $N-$body problems, in this paper we seek for periodic solutions of (\ref{eq}) moving on the same orbit, i.e., setting the period as $T=1$, we look for solutions such that 

\begin{equation}
q_i(t)=Q\left(t+k_i \right), \ \ \ i=1,\dots,n,
\end{equation}
with $\ 0=k_1<\dots<k_n<1$, and for some periodic function $Q:[0,1]\rightarrow S^2$.

Since our problem is on the curved space, the solution is much more complicated than Euclidean space.

We define the following set

\begin{equation}
\begin{split}
D=&\left\lbrace q=(q_1,\dots,q_n) \in (S^2)^n \ | \ q_i  \ \text{is absolutely continuous and} \ q_i(t)\neq q_j(t), \right. \\
 &\left. \text{for} \ \ 1 \leq i\neq j \leq n  \right\rbrace.
\end{split}
\end{equation}


The Lagrangian action associated to system (\ref{eq}) on $D$ is

\begin{equation}
f(q)=\int_0^1\left( \dfrac{1}{2}\sum_1^nm_i|\dot{q}_i(t)|^2+U(q(t)) \right) dt. \label{functional}
\end{equation}

We are interested in showing the existence of new choreography solutions of (\ref{eq}). In other words, we will not only show that the Lagrangian action functional reaches its minimum in $D$, but in a subset where the $n$ particles follow the same orbit.

There are some works where circular choreography solutions have been found, see for instance \cite{Diacu,paper1}. In order to find new families of choreographies we will introduce the following sets

\begin{equation}
\begin{split}
E_1=&\left\lbrace q=(q_1,\dots,q_n) \in D \ | \ q_1(t)=q_n(t+1/n), \  \ q_i(t)=q_{i-1}(t+1/n),\right.\\
&\left. i=2,\dots, n \ \right\rbrace, \nonumber\\
E_2=&\left\lbrace q=(q_1,\dots,q_n) \in D \ | \ q_1(t+1/2)=diag\{1,-1,1 \} q_1(t)\  \right\rbrace, \nonumber\\
E_3=&\left\lbrace q=(q_1,\dots,q_n) \in D \ | \ q_1(-t)=diag\{ -1,-1,1 \} q_1(t)\ \right\rbrace.\nonumber
\end{split}
\end{equation}

%

It is not difficult to see that $q_1(0)=(0,0,1)=q_1(1/2)$ for $q \in E_2 \cap E_3$ . Hence circular orbits mentioned above do not belong to $E_2 \cap E_3$. The set of choreographies are orbits on

\[  H=\left\lbrace q=(q_1,\dots,q_n) \in D \ | q_1 \in E_1 \cap E_2 \cap E_3\ \right\rbrace.  \]

Let $B=diag\{1,-1,1 \}$ and $C=diag\{-1,-1,1 \}$. We now define the following actions $\Phi_1$, $\Phi_2$ and $\Phi_3$ on $D$

 \[  \Phi_1(q(t))=(q_n(t+1/n),q_1(t+1/n),\cdots, q_{n-1}(t+1/n)), \]
 \[  \Phi_2(q(t))=(Bq_1(t+1/2),Bq_2(t+1/2),Bq_3(t+1/2)), \]
 \[ \Phi_3(q(t))=(Cq_1(-t),Cq_2(-t),Cq_3(-t)) . \]

Then the fixed point of $\Phi_i$ is $E_i$, $i=1,2,3$. 
 
 We refer to Palais' principle of symmetric criticality \cite{palais}, in order to conclude that the critical points of $f$ restricted to $H$ are critical points of $f$ on $D$ as well. 
 
 We state our main theorem as follows

\begin{theorem}\label{theorem1}
Consider the $n-$body problem on $S^2$. The Lagrange action functional (\ref{functional}) reaches its minimum on $H$. This minimum is a periodic non-collision solution of the equations of motion (\ref{eq}).
\end{theorem}

We first show that any critical point of (\ref{functional}) on $D$ satisfies (\ref{eq}).

\begin{prop}
A critical point of the Lagrange action functional on $D$ is a solution of the equations of motion.
\end{prop}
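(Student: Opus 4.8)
The plan is to carry out a constrained first-variation (Euler--Lagrange) computation, the essential subtlety being that each body is forced to remain on $S^2$, so the admissible variations are not arbitrary but must be tangent to the constraint manifold $(S^2)^n$. First I would fix a critical point $q=(q_1,\dots,q_n)\in D$ and consider a smooth one-parameter family $q^\varepsilon$ with $q^0=q$, each $q_i^\varepsilon(t)\in S^2$, and the same periodicity as $q$. Differentiating the constraint $|q_i^\varepsilon(t)|^2=1$ at $\varepsilon=0$ shows that the variation field $\xi_i:=\partial_\varepsilon q_i^\varepsilon|_{\varepsilon=0}$ satisfies $\xi_i(t)\cdot q_i(t)=0$ pointwise; that is, $\xi_i$ is everywhere tangent to the sphere. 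These are exactly the directions in which $f$ may be differentiated while staying inside $D$.

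Next I would compute $\frac{d}{d\varepsilon}f(q^\varepsilon)|_{\varepsilon=0}$. The kinetic part yields $\int_0^1\sum_i m_i\,\dot q_i\cdot\dot\xi_i\,dt$, and since $q$ (hence $\xi_i$) is $1$-periodic the boundary terms vanish upon integration by parts, producing $-\int_0^1\sum_i m_i\ddot q_i\cdot\xi_i\,dt$; the potential part contributes $\int_0^1\sum_i \frac{\partial U}{\partial q_i}\cdot\xi_i\,dt$. Collecting terms, the criticality condition reads
\[
\int_0^1\sum_{i=1}^n\Big(-m_i\ddot q_i+\frac{\partial U}{\partial q_i}\Big)\cdot\xi_i\,dt=0
\]
for every tangent variation field $\xi_i$.

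Because $\xi_i$ ranges over all fields with $\xi_i\cdot q_i=0$, the fundamental lemma of the calculus of variations forces the bracket to be \emph{normal} to $S^2$ at $q_i$, i.e.\ parallel to $q_i$. Hence there exist scalar functions (Lagrange multipliers) $\lambda_i(t)$ with $m_i\ddot q_i=\frac{\partial U}{\partial q_i}-\lambda_i q_i$. To pin down $\lambda_i$ I would use the constraint itself: differentiating $|q_i|^2=1$ twice gives $q_i\cdot\ddot q_i=-|\dot q_i|^2$, and taking the inner product of the Euler--Lagrange relation with $q_i$ (using that $\frac{\partial U}{\partial q_i}$, as the gradient taken along the sphere, is orthogonal to $q_i$) yields $\lambda_i=m_i(\dot q_i\cdot\dot q_i)$. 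Substituting this back reproduces exactly (\ref{eq}); the term $-m_i(\dot q_i\cdot\dot q_i)q_i$ is thereby revealed as the constraint (centripetal) force keeping each body on $S^2$.

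The main obstacle is not the formal computation but two points of rigor. First, one must correctly isolate the admissible tangent variations and the resulting multiplier, which is precisely where the geometry of the sphere enters and distinguishes this from the Euclidean case. Second, there is a regularity issue: a priori a minimizer lies only in the class of absolutely continuous curves with $\dot q_i\in L^2$, so the Euler--Lagrange identity is first obtained in a weak sense, and one must bootstrap, using that $U$ is smooth on $D$ where no collisions occur, to conclude that $q$ is in fact $C^2$ and solves (\ref{eq}) classically.
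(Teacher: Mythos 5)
Your proposal is correct and follows essentially the same route as the paper: a constrained first variation with integration by parts, identification of the Lagrange multiplier by taking the inner product with $q_i$ (using $\ddot q_i\cdot q_i=-|\dot q_i|^2$ and the tangency of $\partial U/\partial q_i$, which the paper justifies via homogeneity of degree zero of $U$), and substitution back to recover (\ref{eq}). The only differences are cosmetic — you restrict to tangent variations and invoke the fundamental lemma where the paper augments the functional with multiplier terms $\lambda_i g_i$, and your closing remark on weak-to-classical regularity is a point the paper passes over silently.
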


\begin{proof}
Let $q_0=({q_0}_1,\dots,{q_0}_n)$ be a critical point of the Lagrange action functional on $D$.

%

For a given $q$, a displacement $\delta f$ is given by (the Gateaux derivative)

\begin{equation}
 \delta f=\dfrac{d}{d \varepsilon}\int_0^1\left( \dfrac{1}{2}\sum_1^nm_i|\dot{q}_i(t)+\varepsilon \dot{p}_i|^2+U(q(t)+\varepsilon p(t)) \right) dt\Big|_{\varepsilon=0} , \label{gateaux}
\end{equation}
restricted to any $p=(p_1,\dots,p_n)$ such that $|q_i(t)+\varepsilon p_i(t)|^2=1$, for every $\varepsilon \rightarrow 0$, and $i=1,\dots,n$.
Let $g_i$ be the function defined as $g_i(q_i)=|q_i(t)|^2-1$ (the constraint $g_i(q_i)=0$ maintains the particle $q_i$ on the sphere $S^2$). At a given time, for displacements  of the constraint equation, the following should be held

\begin{equation}
\delta g_i=\dfrac{d}{d \varepsilon}\left( |q_i(t)+\varepsilon p_i(t)|^2-1\right)\Big|_{\varepsilon=0}=0, \ \ i=1,\cdots,n.  \nonumber
\end{equation}

Integrating both sides with respect time we have

\begin{equation}
\delta h_i=\int_0^1 \dfrac{d}{d \varepsilon}\left( |q_i(t)+\varepsilon p_i(t)|^2-1\right)\Big|_{\varepsilon=0}dt=0,  \ \ i=1,\cdots,n.  \nonumber
\end{equation}

From Hamilton principle we have

\begin{equation}
\begin{split}
0=&\delta f+\sum_{i=1}^n\lambda_i \delta h_i\\
=&\dfrac{d}{d \varepsilon}\int_0^1\left( \dfrac{1}{2}\sum_{i=1}^nm_i|\dot{q}_i(t)+\varepsilon \dot{p}_i|^2+U(q(t)+\varepsilon p(t)) +\sum_{i=1}^n \lambda_i g_i \Big|_{q_i=q_{0_i}} \right) dt\Big|_{\varepsilon=0}\\
\end{split}  \nonumber
\end{equation}
where each $\lambda_i$ is the Lagrange multiplier corresponding to the body $i$, it will be computed later in the proof	.

Then we have
\begin{equation}
\begin{split}
0=&\dfrac{d}{d \varepsilon}\int_0^1\left( \dfrac{1}{2}\sum_1^nm_i|\dot{q}_i(t)+\varepsilon \dot{p}_i|^2+\dfrac{1}{2}\sum_{i=1}^n\sum_{j=1,j\neq i}^nm_im_j\cot(d(q_i+\varepsilon p_i,q_j+\varepsilon p_j))\right.\\
\end{split}  \nonumber
\end{equation}
\begin{equation}
\begin{split}
&\left. \left. +\lambda_i( |q_i(t)+\varepsilon p_i(t)|^2-1) \Big|_{q_i=q_{0_i}} \right) \Big|_{\varepsilon=0} dt \right]\\
=&\sum_{i=1}^n \left[ \int_0^1  \dfrac{d}{d \varepsilon}\left( \dfrac{1}{2}m_i |\dot{q}_i(t)+\varepsilon \dot{p}_i|^2 + \right. \right.\\
& \left. \left. \dfrac{1}{2}\sum_{j=1,j\neq i}^n\dfrac{m_im_j\dfrac{(q_i+\varepsilon p_i)\cdot (q_j+\varepsilon p_j)}{\sqrt{(q_i+\varepsilon p_i)\cdot (q_i+\varepsilon p_i)}\sqrt{(q_j+\varepsilon p_j)\cdot (q_j+\varepsilon p_j)}}}{\left(1-\left(\dfrac{(q_i+\varepsilon p_i)\cdot (q_j+\varepsilon p_j)}{\sqrt{(q_i+\varepsilon p_i)\cdot (q_i+\varepsilon p_i)}\sqrt{(q_j+\varepsilon p_j)\cdot (q_j+\varepsilon p_j)}}\right)^2\right)^{1/2}}  \right.  \right.\\
&\left. \left. +\lambda_i( |q_i(t)+\varepsilon p_i(t)|^2-1)  \Big|_{q_i=q_{0_i}}\right) \Big|_{\varepsilon=0} dt \right].\\
\end{split}   \nonumber
\end{equation}

After considering $\varepsilon \rightarrow 0$, and $q_i \cdot q_i=1$ we have
\begin{equation}
\begin{split}
0=&\sum_{i=1}^n \left[ \int_0^1 \left(m_i \dot{q}_i\cdot \dot{p}_i + \dfrac{1}{2}\sum_{j=1,j \neq i}^nm_im_j \dfrac{[q_i \cdot p_j+q_j \cdot p_i]-(q_i\cdot q_j)[q_j \cdot p_j+q_i\cdot p_i]}{(1-(q_i \cdot q_j)^2)^{3/2}} \right. \right.\\
&\left. \left. +2\lambda_i (q_i \cdot p_i) \Big|_{q_i=q_{0_i}} \right) dt \right]\\
=&\sum_{i=1}^n \left[ \int_0^1 \left( m_i\dot{q}_i\cdot \dot{p}_i +\sum_{j=1,j \neq i}^nm_im_j \dfrac{[q_j \cdot p_i]-(q_i\cdot q_j)[ q_i \cdot p_i]}{(1-(q_i \cdot q_j)^2)^{3/2}} +2\lambda_i (q_i \cdot p_i) \Big|_{q_i=q_{0_i}} \right) dt \right]\\
=&\sum_{i=1}^n \left[ \int_0^1 \left( m_i\dot{q}_i\cdot \dot{p}_i +\sum_{j=1,j \neq i}^nm_im_j \left(\dfrac{q_j -(q_i\cdot q_j) q_i}{(1-(q_i \cdot q_j)^2)^{3/2}} \right) \cdot p_i +2\lambda_i (q_i \cdot p_i) \Big|_{q_i=q_{0_i}} \right) dt \right].
\end{split}  \nonumber
\end{equation}

Integrating the first term and considering that the variations vanish at the the end points
\begin{equation}
\begin{split}
0=&\sum_{i=1}^n \left[m_i \dot{q}_i  \cdot p_i \Big|_0^1+ \int_0^1 \left( -m_i\ddot{q}_i\cdot p_i +\sum_{j=1,j \neq i}^nm_im_j \left(\dfrac{q_j -(q_i\cdot q_j) q_i}{(1-(q_i \cdot q_j)^2)^{3/2}} \right) \cdot p_i \right. \right. \\
&\left. \left.+2\lambda_i (q_i \cdot p_i) \Big|_{q_i=q_{0_i}} \right) dt \right]\\
=&\sum_{i=1}^n \left[ \int_0^1 \left( -m_i\ddot{q}_i\cdot p_i +\sum_{j=1,j \neq i}^nm_im_j \left(\dfrac{q_j -(q_i\cdot q_j) q_i}{(1-(q_i \cdot q_j)^2)^{3/2}} \right) \cdot p_i \right. \right. \\
&\left. \left.+2\lambda_i (q_i \cdot p_i) \Big|_{q_i=q_{0_i}} \right) dt \right]\\
=&\sum_{i=1}^n \left[ \int_0^1 \left( -m_i\ddot{q}_i +\sum_{j=1,j \neq i}^n m_im_j\left(\dfrac{q_j -(q_i\cdot q_j) q_i}{(1-(q_i \cdot q_j)^2)^{3/2}} \right) +\lambda_i q_i \right) \cdot p_i \ \Big|_{q_i=q_{0_i}} dt \right]\\
=&\sum_{i=1}^n \left[ \int_0^1 \left( -m_i\ddot{q}_i +\dfrac{\partial U}{\partial q_i} +2\lambda_i q_i \right) \cdot p_i \ \Big|_{q_i=q_{0_i}} dt \right].\\
\end{split}\nonumber
\end{equation}

Since this must hold for any $p=(p_1,\dots, p_n)$ in the interval $(0,1)$, it follows that the critical point  should satisfy

\begin{equation}
-m_i\ddot{q}_i +\dfrac{\partial U}{\partial q_i} -2\lambda_i q_i \Big|_{q_i=q_{0_i}}=0, \ \ \ i=1,\cdots,n, \label{eq4}
\end{equation}
where the multiplier $\lambda_i$ can be computed multiplying the last expression by $q_i$ 

\begin{equation}
-m_i\ddot{q}_i \cdot q_i +\dfrac{\partial U}{\partial q_i}\cdot q_i -2\lambda_i q_i \cdot q_i \Big|_{q_i=q_{0_i}}=0, \ \ \ i=1,\cdots,n. \nonumber
\end{equation}

Using the fact that the potential is a homogeneous function of degree zero, and that the expression $\ddot{q}_i\cdot q_i=-\dot{q}_i\cdot \dot{q}_i$ holds we have

\begin{equation}
\lambda_i=\dfrac{m_i\dot{q}_i\cdot \dot{q}_i}{2}. \nonumber
\end{equation}

Substituting this expression into (\ref{eq4}), we have

\begin{equation}
-m_i\ddot{q}_i  +\dfrac{\partial U}{\partial q_i} -m_i(\dot{q}_i\cdot \dot{q_i}) q_i \Big|_{q_i=q_{0_i}}=0, \ \ \ i=1,\cdots,n. \nonumber
\end{equation}

Hence, any critical point $q_0$ of the Lagrangian action satisfies the equation of motion.

\end{proof}

\section{Proof of Theorem \ref{theorem1}}
Now we prove that the action functional reaches its minimum on $D$. The proof of the theorem will be a consequence of the following result,

\begin{prop} \cite{Ramm}
A weakly lower semicontinuous from below functional $F(u)$, in a reflexive  Banach space $U$ is bounded from below on any  bounded  weakly  closed  set $M \subset DomF$ and attains its minimum  on $M$ at a point of $M$.
\end{prop}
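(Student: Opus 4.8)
The plan is to invoke the direct method of the calculus of variations, whose two pillars are a compactness property supplied by the reflexivity of $U$ and the semicontinuity hypothesis on $F$. The decisive functional-analytic input is Kakutani's theorem, which asserts that the closed unit ball of a reflexive Banach space is weakly compact; combined with the Eberlein--\v{S}mulian theorem this shows that every bounded, weakly closed subset $M \subset U$ is weakly sequentially compact (indeed $M$ lies in some ball $B_R$, which is weakly compact, and a weakly closed subset of a weakly compact set is weakly compact). I expect this to be the crux of the matter and the only place where reflexivity is genuinely used: in a non-reflexive space a bounded set need not admit the weakly convergent subsequences on which the whole argument rests.

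Granting weak sequential compactness, I would run a single minimizing-sequence argument that simultaneously yields the lower bound and the minimizer. Set $m = \inf_{u \in M} F(u) \in [-\infty, +\infty)$ and choose $(u_n) \subset M$ with $F(u_n) \to m$. By weak sequential compactness there is a subsequence $u_{n_k} \rightharpoonup u_*$, and since $M$ is weakly closed the weak limit satisfies $u_* \in M \subset \operatorname{Dom}F$, so $F(u_*)$ is a finite real number. Weak lower semicontinuity then gives $F(u_*) \le \liminf_k F(u_{n_k}) = m$. Because $u_* \in M$, the definition of the infimum also gives $m \le F(u_*)$, whence $m = F(u_*)$ is finite. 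This establishes at once that $F$ is bounded from below on $M$ (the value $m$ is a real lower bound) and that the minimum is attained at the point $u_* \in M$.

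I would close by emphasizing that the content of the proposition is entirely contained in the weak compactness furnished by reflexivity; everything else is the routine interplay between lower semicontinuity and the definition of the infimum. The work that remains for Theorem~\ref{theorem1} is therefore not this abstract statement but its verification for the action $f$: one must realise $U$ as a suitable Sobolev space of loops on $(S^2)^n$, identify the admissible set with (the weak closure of) $H$, and check that $f$ is coercive and weakly lower semicontinuous there --- the kinetic term being weakly lower semicontinuous by convexity of $|\dot q|^2$, while the potential term requires genuine care to control the behaviour near collisions and to guarantee that the minimizer stays in the collision-free set $D$.
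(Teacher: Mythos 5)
Your proof is correct, but there is nothing in the paper to compare it against: the paper states this proposition as a quoted result from Ramm's book \cite{Ramm} and gives no proof at all, immediately turning instead to the verification of its hypotheses for the action functional (weak lower semicontinuity of $f$ and weak closedness of $D \cup \partial D$). What you have written is the standard direct-method argument that fills in that citation: Kakutani's theorem gives weak compactness of closed balls in a reflexive space, Eberlein--\v{S}mulian upgrades this to weak sequential compactness of any bounded weakly closed $M$, and then a single minimizing-sequence argument (extract $u_{n_k} \rightharpoonup u_* \in M$, use sequential weak lower semicontinuity to get $F(u_*) \le m$, and $u_* \in M$ to get $m \le F(u_*)$) yields both the finiteness of the infimum and its attainment at once. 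This is exactly the intended content of the cited result, and your version is consistent with the paper's (sequential) definition of weak lower semicontinuity used in the next proposition. Your closing remark is also apt: the mathematical work in the paper lies not in this abstract statement but in verifying its hypotheses for $f$ on the loop space, and indeed the delicate points there --- coercivity/boundedness of the minimizing set and ruling out collisions in the minimizer --- are precisely the ones the paper treats only lightly.
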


Our task now is to prove that the functional ($\ref{functional}$) is weakly lower semicontinuous from below and that $D \cup \partial D$ is weakly closed.

\begin{prop}
$f(q)$ is weakly lower semicontinuous from below on $D \cup  \partial D$
\end{prop}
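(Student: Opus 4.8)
The plan is to realize $f$ on the Hilbert space $H^1([0,1],(\mathbb{R}^3)^n)$, which is the natural reflexive setting for the preceding existence proposition, and to split the functional as $f=K+V$, where $K(q)=\int_0^1\frac{1}{2}\sum_{i=1}^n m_i|\dot q_i|^2\,dt$ is the kinetic part and $V(q)=\int_0^1 U(q(t))\,dt$ is the potential part. I would establish weak lower semicontinuity of each piece separately along an arbitrary sequence $q_k\rightharpoonup q$ weakly in $H^1$ with $q,q_k\in D\cup\partial D$, and then add the two $\liminf$ inequalities, since the sum of two weakly lower semicontinuous functionals is again weakly lower semicontinuous.

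For the kinetic part the argument is the standard one. The functional $K$ is a positive–definite quadratic form in $\dot q$, hence convex and strongly continuous on $H^1$, and a convex strongly continuous functional is automatically weakly lower semicontinuous. Concretely, weak convergence $q_k\rightharpoonup q$ in $H^1$ forces $\dot q_k\rightharpoonup\dot q$ weakly in $L^2$, and since the mass-weighted $L^2$ norm is weakly lower semicontinuous one obtains $K(q)\le\liminf_k K(q_k)$.

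For the potential part the first step is to exploit the one–dimensional compact Sobolev embedding $H^1([0,1])\hookrightarrow C^0([0,1])$: weak convergence in $H^1$ yields uniform convergence $q_k\to q$ on $[0,1]$, hence $q_k(t)\to q(t)$ for every $t$. Wherever the limit configuration has no collision, continuity of $\cot d(\cdot,\cdot)$ gives $U(q_k(t))\to U(q(t))$ pointwise, and I would pass this to the integral via Fatou's lemma to obtain $V(q)\le\liminf_k V(q_k)$. This also correctly covers the points of $\partial D$: at a collision time $d(q_i,q_j)\to 0$ forces $\cot d\to+\infty$, so the limit integrand is $+\infty$ there, and (since $\cot d\sim d^{-1}$ near a collision, as for the Kepler potential) the action of a genuine collision diverges, which only reinforces the desired inequality.

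The main obstacle is precisely the lower bound needed to apply Fatou's lemma. On $S^2$ the cotangent potential is \emph{not} bounded below, because $\cot d\to-\infty$ as two bodies approach the antipodal configuration $d\to\pi$, which is a legitimate situation for points of $D$ (one only excludes $q_i=q_j$, not $q_i=-q_j$). I would therefore have to produce a uniform lower bound for $U(q_k(t))$ along the sequence, and the delicate point is to prevent the uniformly convergent family from drifting toward antipodal pairs. This is where the geometry of the sphere and the finite-action constraint must be combined with the $C^0$-control to keep each distance $d(q_i,q_j)$ bounded away from $\pi$; handling this antipodal singularity is the genuinely nontrivial ingredient, in contrast to the collision singularity, which is harmless for lower semicontinuity.
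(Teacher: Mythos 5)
Your proposal follows exactly the structure of the paper's own proof: the splitting $f=K+V$, weak lower semicontinuity of the kinetic part from convexity of the $L^2$ norm, and for the potential the chain ``weak $H^1$ convergence $\Rightarrow$ uniform convergence $\Rightarrow$ pointwise convergence of $\cot d(q_i^k,q_j^k)$ $\Rightarrow$ Fatou.'' The paper merely adds a case analysis (limit in $D$ versus $\partial D$, and collision set of the limit of zero versus positive measure), but the mechanism is identical, including the observation that the collision singularity $\cot d\to+\infty$ can only help the $\liminf$ inequality.

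However, your proof is incomplete, and by your own admission: the last paragraph states that Fatou's lemma needs a uniform integrable lower bound on the integrands, correctly observes that no such bound is available because $\cot d\to-\infty$ as $d\to\pi$ and antipodal configurations are \emph{not} excluded from $D$, and then leaves this step unresolved. This is a genuine gap, and in fact it cannot be filled without changing the statement. If every pairwise distance of the limit path stays below $\pi-\delta$, uniform convergence supplies the lower bound for large $k$ and your argument closes; but an $H^1$ loop in $D$ can touch an antipodal configuration with \emph{finite} action (for instance $\pi-d_{ij}(t)\sim|t-t_0|^{3/4}$ near $t_0$ is compatible with $\dot q\in L^2$ and gives $\int(\cot d_{ij})^-\,dt<\infty$), and such a limit can be approximated, uniformly and with bounded kinetic energy, by loops in $D$ that dip slightly closer to (or linger at) the antipodal configuration, driving $\int U(q^k)\,dt\to-\infty$; then $\liminf_k f(q^k)=-\infty<f(q)$ and weak lower semicontinuity genuinely fails at such points. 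So the obstacle you flagged is not a technicality but a counterexample waiting to happen, and it can only be removed by shrinking $D$ (e.g.\ keeping all pairs a definite distance away from $d=\pi$, or confining configurations suitably) or by otherwise reinterpreting the functional. You should also know that the paper's proof is silent on all of this: it applies Fatou's lemma to $\sum_{i<j}\cot d(q_i^n,q_j^n)$ with no lower bound whatsoever and never mentions the singularity at $d=\pi$, so the gap you identified is a gap in the published argument as well; identifying it is the most substantive contribution of your write-up.
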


\begin{proof}
Recall that $f$ is called  weakly lower semicontinuous from below if for any  $q^n \in D \cup  \partial D$ such that $q^n \rightarrow q$ weakly, the following inequality holds

\[ \liminf_{n \rightarrow \infty} f(q^n)\geq f(q). \]

If $q \in D$, then there exists $N$ such that for $n>N$, $q^n \in D$. The functions $q^n_i$ are continuous and converges to $q_i$ uniformly.

This implies that $U(q^n_i)\rightarrow U(q_i)$ for $t \in [0,1]$.

By Fatou's lemma we have

\[ \liminf_{n \rightarrow \infty} f(q^n) \geq \int_0^1 \dfrac{1}{2}\sum_1^3|\dot{q}_i(t)|^2+ \int_0^1 \liminf_{n\rightarrow \infty}\left(\sum_{i<j} \cot d(q_i^n,q_j^n)\right)dt=f(q).\]

Now let us suppose that $q_i^n \in \partial D$ and $q_i^n \rightarrow q_i$ weakly.

There exist $t_0 \in [0,1)$ such that $q_{i_0}^n(t_0)=q_{j_0}^n(t_0)$ for $i_0\neq j_0$. Consider the set $C=\{ t \in [0,1) \ | \text{  there exist} \ i_0\neq j_0 \ \text{with} \ q_{i_0}(t)=q_{j_0}(t)  \}$.

Consider the Lebesgue measure, $\mu(C)$, of $C$. Firstly, let us suppose that $\mu(C)=0$. Since $q^n$ converges to $q$ uniformly, then the following holds almost everywhere,

$$\cot(d(q_i^n(t),q_j^n(t)))\rightarrow \cot(d(q_i(t),q_n(t))).$$

This implies, by Fatou's lemma
\begin{equation}
\begin{split}
\int_0^1 \cot(d(q_i(t_0),q_n(t_0)))&=\int_0^1\liminf_{n}\cot(d(q_i^n(t_0),q_j^n(t_0)))\\
&\leq \liminf_{n}  \int_0^1 \cot(d(q_i^n(t_0),q_j^n(t_0))).
\end{split}
\end{equation}

Hence $f(q)\leq \liminf_n f(q^ n)$. Secondly, if $\mu(C)>0$, then 

$$\int_0^1 \cot(d(q_i(t),q_j(t)))= +\infty.$$

Additionally we have,

$$\cot(d(q^n_i(t),q^n_j(t)))\rightarrow \cot(d(q_i(t),q_j(t))),$$
uniformly. This implies that

$$\int_0^1 \cot(d(q^n_i(t),q^n_j(t)))\rightarrow +\infty.$$ 

It follows that $$f(q)\leq \liminf_n f(q^n).$$

\end{proof}

\begin{prop}
$D \cup  \partial D$ is a weakly closed subset of $(W^{1,2}(\mathbb{R}/\mathbb{Z},S^2))^3:=\{(q_1,q_2,q_3) \in (S^2)^3 | \ q_i \in L^2, \ \dot{q}_i \in L^2, \ q_i(t+1)=q_i(t), \ i=1,2,3   \}$
\end{prop}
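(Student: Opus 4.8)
The plan is to take an arbitrary sequence $q^m=(q^m_1,q^m_2,q^m_3)\in D\cup\partial D$ with $q^m\rightharpoonup q$ weakly in the ambient linear space $(W^{1,2}(\mathbb{R}/\mathbb{Z},\mathbb{R}^3))^3$, and to show that the limit $q$ again lies in $D\cup\partial D$. The conceptual difficulty to keep in mind from the outset is that the admissible configurations are pinned to the sphere through the pointwise condition $|q_i(t)|=1$, and this constraint cuts out a \emph{non-convex} subset of the Sobolev space. Hence Mazur's lemma (strongly closed plus convex implies weakly closed) does not apply, and the argument cannot be purely soft. Everything will hinge on exploiting that the loops are parametrized by a one-dimensional circle, which lets me upgrade weak convergence to uniform convergence and so control the nonlinear constraint.

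First I would record that weak convergence in $W^{1,2}$ here forces uniform convergence. A weakly convergent sequence is norm-bounded by the uniform boundedness principle, so $\{q^m\}$ is bounded in $(W^{1,2}(\mathbb{R}/\mathbb{Z}))^3$. In one dimension the elementary estimate $|u(t)-u(s)|=\left|\int_s^t \dot u\,d\tau\right|\le |t-s|^{1/2}\|\dot u\|_{L^2}$ gives a continuous embedding $W^{1,2}(\mathbb{R}/\mathbb{Z})\hookrightarrow C^{0,1/2}(\mathbb{R}/\mathbb{Z})$, so $\{q^m\}$ is bounded in $(C^{0,1/2})^3$, hence equicontinuous and uniformly bounded. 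By Arzel\`a--Ascoli it is precompact in $(C^0)^3$, and any uniform limit of a subsequence must coincide with the weak limit $q$, since uniform convergence implies weak convergence and weak limits are unique. As every subsequence then admits a further subsequence converging uniformly to the same $q$, the full sequence converges to $q$ in $(C^0)^3$.

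With uniform convergence established, the defining properties of the admissible set pass to the limit one at a time. The sphere constraint survives because $|q_i(t)|=\lim_m|q^m_i(t)|=1$ for every $t$, so each $q_i(t)\in S^2$; the periodicity $q_i(t+1)=q_i(t)$ is stable under pointwise limits; and $q\in(W^{1,2})^3$ simply because it is the weak $W^{1,2}$ limit of the $q^m$. Thus $q$ is a periodic $W^{1,2}$ loop into $(S^2)^3$. If $q$ has no collisions it already belongs to $D$; if $q_i(t)=q_j(t)$ for some $t$ and $i\neq j$, then $q$ is a collision configuration lying on $\partial D$, being a uniform (in particular weak) limit of elements of $D\cup\partial D$. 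In either case $q\in D\cup\partial D$. One can moreover check that every such loop is itself approximable in $W^{1,2}$ by collision-free loops (perturb the three components by small independent rotations), so that $D\cup\partial D$ is exactly this space of periodic $S^2$-valued $W^{1,2}$ loops with collisions permitted, which is the weakly closed object the argument produces. Since $W^{1,2}$ is reflexive and separable, Eberlein--\v{S}mulian identifies the weak sequential closedness just shown with weak closedness.

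I expect the main obstacle to be precisely the non-convexity of the constraint $|q_i|=1$: for non-convex sets in a reflexive space weak closedness can genuinely fail, so the statement has real content. The decisive step, which I would state and prove most carefully, is the compact embedding $W^{1,2}(\mathbb{R}/\mathbb{Z})\hookrightarrow C^0$, available only because the domain is the one-dimensional circle; it is exactly what converts the weak hypothesis into the uniform convergence under which the nonlinear pointwise constraint $|q_i(t)|=1$ is preserved.
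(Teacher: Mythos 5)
Your proposal is correct and follows essentially the same route as the paper, whose entire proof is the one-line observation that weak convergence in $W^{1,2}$ implies uniform convergence, whence the limit stays in $D\cup\partial D$. You have simply supplied the details the paper omits (boundedness, the embedding $W^{1,2}\hookrightarrow C^{0,1/2}$, Arzel\`a--Ascoli, and the stability of the pointwise constraints under uniform limits), so there is no substantive difference in approach.
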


\begin{proof}

Since $q^n \rightarrow q$ weakly, then $q^n \rightarrow q$ uniformly, then $q \in D \cup \partial D$. Hence $D \cup  \partial D$ is a weakly closed subset of $(W^{1,2}(\mathbb{R}/\mathbb{Z},S^2))^3$.


 
\end{proof}

\section{Choreography solution for the three-problem on $S^2$}

In order to show a choreography solution for the three-body problem on $S^2$, we will firstly estimate the lower bound of the Lagrangian action for a binary collision generalized solution. We will consider masses equal to 1.

\begin{prop}\label{p}
Consider three bodies on $S^2$. Let $q \in T^*(S^2)^3$ be a periodic binary collision generalized solution, then the Lagrangian action satisfies $f(q)\geq \frac{3}{2}(12\pi)^{2/3}-3$.
\end{prop}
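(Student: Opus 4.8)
The plan is to isolate the colliding pair and compare its contribution to a Keplerian collision arc, for which a sharp (Gordon-type) lower bound on the action is available. Relabel the bodies so that bodies $1$ and $2$ suffer the binary collision, $d(q_1(t_0),q_2(t_0))=0$ for some $t_0\in[0,1)$. Since the masses equal $1$, I would first split the action (\ref{functional}) as
\begin{align*}
f(q)=&\int_0^1\Big(\tfrac12(|\dot q_1|^2+|\dot q_2|^2)+\cot d(q_1,q_2)\Big)\,dt\\
&+\int_0^1\Big(\tfrac12|\dot q_3|^2+\cot d(q_1,q_3)+\cot d(q_2,q_3)\Big)\,dt,
\end{align*}
discard the nonnegative term $\tfrac12|\dot q_3|^2$, and bound the two non-colliding interactions $\cot d(q_1,q_3)+\cot d(q_2,q_3)$ from below by a constant. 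Together with the discrepancy between $\cot d$ and the Keplerian profile $1/d$ for the colliding pair (using $\cot d = 1/d - d/3-\cdots$), this is the mechanism I expect to produce the additive $-3$.

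For the colliding pair I would pass to the relative vector $x=q_1-q_2$ in the ambient $\mathbb{R}^3$. Near the collision the geodesic distance satisfies $d(q_1,q_2)=|x|(1+O(|x|^2))$, so $\cot d(q_1,q_2)=|x|^{-1}+O(1)$, while the reduced mass being $1/2$ gives $\tfrac12(|\dot q_1|^2+|\dot q_2|^2)\ge \tfrac14|\dot x|^2$. Hence the first integral dominates, up to the constant already accounted for, a Kepler action $\int_0^1\big(\tfrac14|\dot x|^2+|x|^{-1}\big)\,dt$. By periodicity $x(t_0)=x(t_0+1)=0$, so $x$ runs over a loop through the singularity; Gordon's theorem states that among such loops the action is minimized by the degenerate rectilinear ellipse and depends only on the (effective) period. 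Inserting that period yields the leading term $\tfrac32(12\pi)^{2/3}$, and combining with the previous paragraph gives $f(q)\ge \tfrac32(12\pi)^{2/3}-3$.

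The hard part will be twofold. First, the constant: the naive single-arc Gordon value for one ejection--collision over the period $1$ with reduced mass $1/2$ is only $\tfrac32(2\pi^2)^{1/3}$, so obtaining the larger number $\tfrac32(12\pi)^{2/3}$ requires exploiting the periodicity and symmetry of the orbit --- which force the relative motion to pass through the singularity several times per period --- and then carefully summing the contributions of the resulting arcs; getting the effective period and the multiplicity right is the crux of the estimate. Second, and more seriously, on $S^2$ the force function is not bounded below, since $\cot d\to-\infty$ as two bodies approach antipodal positions. Thus the step ``bound the non-colliding interactions by a constant'' is legitimate only once approaches to antipodal configurations are excluded; I would handle this either by restricting the competing paths to an open hemisphere or by deriving an a priori bound ruling out antipodal approaches along any path of finite action. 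Making that exclusion rigorous, and reconciling it with the generalized (regularized) nature of the solution, is where the real work lies.
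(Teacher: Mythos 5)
Your decomposition cannot reach the stated constant, and the difficulty you yourself flag as the crux --- getting the multiplicity of collision arcs right --- is not resolvable inside that decomposition. The symmetries in play force the colliding pair to collide exactly twice per period: the $E_2$ symmetry turns $q_1(t_0)=q_2(t_0)$ into $q_1(t_0+1/2)=q_2(t_0+1/2)$, and the choreography symmetry $E_1$ creates six collision events per period but only two for any given pair. So the relative motion $x=q_1-q_2$ consists of two ejection--collision arcs of duration $1/2$, and your colliding-pair functional $\int_0^1\bigl(\frac14|\dot x|^2+\frac{1}{|x|}\bigr)\,dt=\frac12\int_0^1\bigl(\frac12|\dot x|^2+\frac{2}{|x|}\bigr)\,dt$ has, by Gordon's estimate applied to each arc, the lower bound $2\cdot\frac12\cdot\frac32(4\pi)^{2/3}(1/2)^{1/3}=3\pi^{2/3}\approx 6.44$. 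Since you discard $\frac12|\dot q_3|^2$ and bound the mixed interactions below by (negative) constants, the best total your split can certify is about $6.44$ minus constants, far short of $\frac32(12\pi)^{2/3}-3\approx 13.86$. No bookkeeping for the pair $(1,2)$ alone can close this gap; the missing contribution must come from the other two pairs.

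What the paper does instead, and what is missing from your proposal, is to spread the kinetic energy over all three pairs via the identity $3\sum_i|\dot q_i|^2=\sum_{i<j}|\dot q_i-\dot q_j|^2+\bigl|\sum_i\dot q_i\bigr|^2$, giving $\frac12\sum_i|\dot q_i|^2\ge\frac16\sum_{i<j}|\dot q_i-\dot q_j|^2$, and to apply the bound $\cot d(q_i,q_j)>\frac{1}{r_{ij}}-1$ of Lemma \ref{cot} to every pair (this is where the $-3$ comes from, one unit per pair). The choreography condition $q_1(t)=q_3(t+1/3)=q_2(t+2/3)$ then makes the three mutual distances time translates of one another, so a collision in one pair forces a collision in every pair, and each Kepler-type functional $\frac13\int_0^1\bigl(\frac12|\dot q_i-\dot q_j|^2+\frac{3}{r_{ij}}\bigr)\,dt$ is bounded below by $\frac12(12\pi)^{2/3}$ via the Zhang--Zhou/Gordon estimate over the two half-period arcs; summing over the three pairs gives $\frac32(12\pi)^{2/3}-3$. (Per pair your constant is actually larger, since $\frac14>\frac16$, but you keep one pair while the paper keeps three.) Finally, your worry about antipodal configurations is legitimate, and the paper does not escape it: the inequality of Lemma \ref{cot} fails for $d$ close to $\pi$ (at $d=2$ one has $\cot 2\approx-0.458<1/(2\sin 1)-1\approx-0.406$), because the first inequality in its proof needs $1-r_{ij}^2/2>0$, i.e.\ $d<\pi/2$. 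That is a genuine defect of the paper's lemma as stated, but in your proposal it is secondary to the lossy decomposition.
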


The following lemma will be useful to proof Proposition \ref{p}.

\begin{lemma}\label{cot}
Consider $q_i$ and $q_j$ on $S^2$ satisfying equations of motion (\ref{eq}), then

$$\dfrac{1}{r_{ij}}-1<\cot(d(q_i,q_j))<\dfrac{1}{r_{ij}},$$
where $r_{ij}$ is the Euclidean distance  between $q_i$ and $q_j$.
\end{lemma}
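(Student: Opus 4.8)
The plan is to eliminate the spherical distance in favour of the central angle and thereby reduce the double inequality to two elementary one-variable trigonometric inequalities. Write $d=d(q_i,q_j)$ for the geodesic distance; on the unit sphere this equals the angle subtended at the centre, so $0<d<\pi$ for distinct, non-antipodal points, and the non-collision hypothesis guarantees $d>0$. The chord joining the two points has Euclidean length
\[
r_{ij}=2\sin(d/2).
\]
Substituting this identity, together with $\sin d=2\sin(d/2)\cos(d/2)$, converts both of the desired bounds into statements about $d$ alone.

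For the upper bound I would simply clear denominators. Since $\sin(d/2)>0$ and $\cos(d/2)>0$ on $(0,\pi)$, multiplying $\cot d<1/r_{ij}$ through by the positive quantity $\sin d=2\sin(d/2)\cos(d/2)$ shows it is equivalent to $\cos d<\cos(d/2)$. This is immediate from the strict monotone decrease of cosine on $[0,\pi]$ together with $d/2<d$, so the right-hand inequality holds on the whole admissible range.

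The lower bound is the delicate part. Using the same substitution and multiplying $1/r_{ij}-1<\cot d$ by $\sin d>0$, the term $\sin d/(2\sin(d/2))$ collapses to $\cos(d/2)$, and the inequality reduces to
\[
\cos(d/2)<\cos d+\sin d.
\]
I would then analyse the auxiliary function $h(d)=\cos d+\sin d-\cos(d/2)$, noting $h(0)=0$ and $h'(0)=1>0$, and establish $h(d)>0$ on the range where the estimate is actually invoked. Because Proposition~\ref{p} only needs the bound near a binary collision, where $d\to 0$, positivity of $h$ for small $d$ suffices; indeed the expansions $\cot d=1/d-d/3+O(d^3)$ and $1/r_{ij}=1/d+d/24+O(d^3)$ give $1/r_{ij}-1-\cot d=3d/8-1+O(d^3)<0$, confirming the lower bound (and showing why the correction term $-1$ is the right order of magnitude).

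The main obstacle is thus the single inequality $\cos(d/2)<\cos d+\sin d$: it is the only step that is not a one-line monotonicity argument, and one must be careful about the range of $d$ for which it is applied, since it degrades as $d$ approaches $\pi$. Restricting attention to the collision regime, where the distances are small, keeps us safely inside the region where $h(d)>0$, so the lemma holds where it is used.
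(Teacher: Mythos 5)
Your reduction via the chord identity $r_{ij}=2\sin(d/2)$ is sound, and your upper-bound argument is complete and correct; it is essentially the same computation as the paper's, which works with the closed form $\cot d = \frac{1-r_{ij}^2/2}{r_{ij}(1-r_{ij}^2/4)^{1/2}}$ (equivalent to your substitution, since $\cos d = 1-r_{ij}^2/2$ and $\sin d = r_{ij}(1-r_{ij}^2/4)^{1/2}$) and enlarges the numerator from $1-r_{ij}^2/2$ to $1-r_{ij}^2/4$. The genuine gap is in your lower bound. You correctly reduce it to $\cos(d/2)<\cos d+\sin d$, but you then verify this only asymptotically as $d\to 0$ and argue that this suffices because the estimate is ``only invoked near a binary collision''. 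That justification fails: in the proof of Proposition \ref{p} the inequality $\cot(d(q_i,q_j))>1/r_{ij}-1$ is applied under the integral to every pair $i<j$ at every time $t\in[0,1]$, in order to pass from $\sum_{i<j}\cot(d(q_i,q_j))$ to $\sum_{i<j}1/r_{ij}-3$; nothing confines the mutual distances to a neighborhood of $0$ along the whole loop. Moreover, the restriction cannot simply be waved away, because your suspicion about degradation near $\pi$ is correct in the strongest sense: the function $h(d)=\cos d+\sin d-\cos(d/2)$ has a zero near $d\approx 1.95$, and beyond it the stated bound is false (at $d=2$ one has $\cot 2\approx -0.458 < 1/(2\sin 1)-1\approx -0.406$). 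So as written, your argument proves the lemma on no definite interval, and it appeals to a usage restriction that the application does not actually provide.

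For comparison, the paper's lower-bound proof drops the factor $(1-r_{ij}^2/4)^{1/2}<1$ from the denominator to get $\cot d > \frac{1}{r_{ij}}-\frac{r_{ij}}{2} > \frac{1}{r_{ij}}-1$, using $r_{ij}<2$. Note that the first of these steps is itself only valid while the numerator $1-r_{ij}^2/2$ is nonnegative, i.e.\ for $d\le \pi/2$, so the paper's proof carries the same hidden range restriction as yours --- consistent with the fact that the lemma, as stated for arbitrary pairs of points on $S^2$, is false near the antipodal configuration. If you want to close your version on the range where the paper's argument is actually valid, one line suffices: for $d\in(0,\pi/2]$ one has $\cos d+\sin d=\sqrt{1+\sin 2d}\ge 1>\cos(d/2)$ since $d>0$. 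For $d$ between $\pi/2$ and the root near $1.95$ the inequality still holds but needs a separate argument, and past that root it is simply false; so any complete proof, yours or the paper's, must either restrict the admissible configurations or supply an a priori estimate keeping the mutual distances away from $\pi$.
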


\begin{proof}
For this proof we will consider the origin of the system at the north pole  of the unit sphere, i.e., at $R=(0,0,1)$. The equations of motion takes the form

\begin{equation}
\ddot{q}_i=\sum_{i=1,j\neq i}^{n}\dfrac{q_j-\left(1-\frac{r_{ij}^2}{2}\right)q_i+\frac{r_{ij}^2R}{2}}{r_{ij}^2\left(1-\frac{r_{ij}^2}{4}\right)^{3/2}}-(\dot{q}_i\cdot \dot{q}_i)(q_i+R).
\end{equation}

The potential energy in $S^2$ is given by

\begin{equation}
\begin{split}
U=\sum_{i<j}\cot(d(q_i,q_j))&=\sum_{i<j}\dfrac{1-\frac{r_{ij}^2}{2}}{r_{ij}\left(1-\frac{r_{ij}^2}{4} \right)^{1/2}},
\end{split}
\end{equation}
for more details about the equations of motion and potential energy written in this coordinates, please see \cite{Diacu5}.

Consider $n=2$, then

\begin{equation}
\begin{split}
\cot(d(q_i,q_j))&=\dfrac{1-\frac{r_{ij}^2}{2}}{r_{ij}\left(1-\frac{r_{ij}^2}{4} \right)^{1/2}}> \dfrac{1-\frac{r_{ij}^2}{2}}{r_{ij}}=\left(\dfrac{1}{r_{ij}}-\dfrac{r_{ij}}{2}\right)> \dfrac{1}{r_{ij}}-1.
\end{split}
\end{equation}

On the other hand, we have

\begin{equation}
\begin{split}
\cot(d(q_i,q_j))&=\dfrac{1-\frac{r_{ij}^2}{2}}{r_{ij}\left(1-\frac{r_{ij}^2}{4} \right)^{1/2}}< \dfrac{1-\frac{r_{ij}^2}{4}}{r_{ij}\left(1-\frac{r_{ij}^2}{4} \right)^{1/2}}=\dfrac{\left(1-\frac{r_{ij}^2}{4} \right)^{1/2}}{r_{ij}}< \dfrac{1}{r_{ij}}.
\end{split}
\end{equation}

Hence we conclude the proof of the lemma.
\end{proof}

Now we can proceed with the proof of Proposition \ref{p}.

\begin{proof}

Consider three point particles $q_1, q_2, q_3 \in S^2$ with masses $m_1=m_2=m_3=1$ satisfying the equations of motion (\ref{eq}), and suppose that the particles $q_1$ and $q_2$ collide, without loss of generality, at the north pole.

The Lagrangian action is given by

$$f(q)=\int_0^1\left(\dfrac{1}{2}\sum_{i=1}^3 |\dot{q}_i^2|+\sum_{1\leq i<j\leq 3}\cot(d(q_i,q_j)) \right)dt,$$
where the constrains $|q_i|^2=1$ and $q_i\cdot \dot{q}_i=0$, $i=1,2,3$, hold.

Notice that \cite{Zhang1,Zhang2} $$\sum_{1\leq i<j\leq 3}|\dot{q}_i-\dot{q}_j|^2+\Bigl| \sum_{i=1}^3\dot{q}_i \Bigr|^2=3\sum_{i=1}^3|\dot{q}_i|^2.$$
We have 

\begin{equation}
\begin{split}
f(q)=&\int_0^1\left(\sum_{k=1}^3\dfrac{1}{2}|\dot{q}_k|^2+\sum_{1\leq i<j\leq 3}\cot(d(q_i,q_j)) \right)dt\\
\geq&\int_0^1\left(\sum_{1\leq i<j\leq 3}\dfrac{1}{6}|\dot{q}_i-\dot{q}_j|^2+\sum_{1\leq i<j\leq 3}\cot(d(q_i,q_j)) \right)dt \\
\geq&\int_0^1\left(\sum_{1\leq i<j\leq 3}\dfrac{1}{6}|\dot{q}_i-\dot{q}_j|^2+\sum_{1\leq i<j\leq 3}\dfrac{1}{r_{ij}}-3 \right)dt \ \text{(by Lemma \ref{cot})}\\
=&\dfrac{1}{3}\int_0^1\left(\sum_{1\leq i<j\leq 3}\dfrac{1}{2}|\dot{q}_i-\dot{q}_j|^2+\sum_{1\leq i<j\leq 3}\dfrac{3}{r_{ij}} \right)dt-3.\\
\end{split}
\end{equation}

If $q_1(t_0)=q_2(t_0)$, then $q_1(t_0+1/2)=q_2(t_0+1/2)$. Then using some estimates of \cite{Zhang1,Zhang2} we have

\begin{equation}
\begin{split}
\dfrac{1}{3}\int_0^1  \left( \dfrac{1}{2}|\dot{q}_1-\dot{q}_2|^2+\dfrac{3}{r_{12}} \right)dt&=\dfrac{2}{3}\int_0^{1/2}  \left( \dfrac{1}{2}|\dot{q}_1-\dot{q}_2|^2+\dfrac{3}{r_{12}} \right)dt\\
&=\dfrac{(12\pi)^{2/3}}{2}. \nonumber
\end{split}
\end{equation}

Since $q_1(t)=q_3(t+1/3)=q_2(t+2/3)$, then 

\begin{equation}
f(q)\geq \dfrac{3(12\pi)^{2/3}}{2}-3.
\end{equation}

\end{proof}

%
%
%

\begin{prop}
$f^{-1}((0,\frac{3}{2}(12 \pi )^{2/3}-3)) \neq \emptyset$
\end{prop}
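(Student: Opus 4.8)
The plan is to exhibit a single explicit loop $\bar q\in H$ whose action lies strictly between $0$ and $\tfrac{3}{2}(12\pi)^{2/3}-3$; together with Proposition \ref{p}, producing such a test path is exactly what forces the minimizer of Theorem \ref{theorem1} to be collision-free. Since every element of $H$ is a choreography generated by the single curve $Q=q_1$, I would first read off the constraints that $E_2$ and $E_3$ impose on $Q(t)=(X(t),Y(t),Z(t))$. The relation $q_1(t+1/2)=Bq_1(t)$ forces $X,Z$ to have period $1/2$ and $Y$ to satisfy $Y(t+1/2)=-Y(t)$, while $q_1(-t)=Cq_1(t)$ forces $X,Y$ odd and $Z$ even. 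These are precisely the symmetries of a spherical figure-eight through the north pole, and a convenient explicit representative lying in the open upper hemisphere is
\[ Q(t)=\left( a\sin 4\pi t,\; b\sin 2\pi t,\; \sqrt{1-a^2\sin^2 4\pi t-b^2\sin^2 2\pi t}\,\right), \]
with amplitudes $a,b$ small enough to keep the radicand positive. Setting $\bar q_i(t)=Q(t+(i-1)/3)$ then gives a candidate in $E_1\cap E_2\cap E_3$.

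Next I would reduce the action to something estimable. Because the three bodies share the curve $Q$, the kinetic term collapses by periodicity to $\tfrac{3}{2}\int_0^1|\dot Q(t)|^2\,dt$, a one-dimensional integral in $a,b$ that is elementary to bound from above. For the potential I would invoke Lemma \ref{cot}, which gives $U(\bar q)<\sum_{i<j}r_{ij}^{-1}$, so that
\[ f(\bar q)<\frac{3}{2}\int_0^1|\dot Q|^2\,dt+\int_0^1\sum_{1\le i<j\le 3}\frac{1}{r_{ij}(t)}\,dt, \]
where $r_{ij}(t)=|Q(t+(i-1)/3)-Q(t+(j-1)/3)|$ is again explicit. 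Both integrals are fully explicit functions of $a$ and $b$; positivity of $f(\bar q)$ is immediate, since the kinetic part is positive and for the chosen amplitudes the three separations stay bounded away from $0$, keeping the potential finite.

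The decisive step is to pin down $a$ and $b$ so that the right-hand side provably drops below $\tfrac{3}{2}(12\pi)^{2/3}-3\approx 13.9$, and simultaneously to verify that $Q(t+1/3)\neq Q(t)$ and $Q(t+2/3)\neq Q(t)$ for all $t$, i.e. that $\bar q\in D$. I expect this to be the main obstacle. A thin eight (small $a$) keeps the kinetic energy low but lets two bodies approach closely at some instants, inflating $\int\sum r_{ij}^{-1}$; a fat eight spreads the bodies out at the cost of kinetic energy, so the bound survives only after genuinely optimizing the trade-off in $(a,b)$. The margin itself appears comfortable rather than razor-thin: one can check, for instance, that the Lagrange (equilateral) relative equilibrium of period one has action $\approx 10.6$, already below the threshold, which indicates the admissible infimum should lie below it as well; but since that relative equilibrium is not itself in $H$, an explicit, suboptimal test curve must still be tuned carefully. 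If the narrow closest-approach intervals prove costly, the crude estimate $\cot d<r_{ij}^{-1}$ can be replaced by the sharper bound $\cot d<(1-r_{ij}^2/4)^{1/2}/r_{ij}$ extracted from the proof of Lemma \ref{cot} to close the gap.
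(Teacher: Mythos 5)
Your overall strategy coincides with the paper's: the paper's proof is exactly a test loop of the family you describe, namely $q_1(t)=(x,y,z)$ with $x(t)=0.15\sin(4\pi t)$, $y(t)=0.2275\sin(2\pi t)$, $z=\sqrt{1-x^2-y^2}$, and $q_i(t)=q_1(t+(i-1)/3)$, with collision-freeness obtained from the Zhang--Zhou observation that $\sin(2\pi t)=\sin(2\pi(t+\tfrac{i-1}{3}))$ forces $\sin(4\pi t)\neq\sin(4\pi(t+\tfrac{i-1}{3}))$. However, your proposal stops short of the one step that constitutes the entire content of the proof: you never fix the amplitudes $(a,b)$ and never verify the inequality $f(\bar q)<\tfrac{3}{2}(12\pi)^{2/3}-3$. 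You explicitly flag this as ``the main obstacle'' and leave it as an optimization to be done; but without a concrete choice of $(a,b)$ and a verified numerical (or analytic) evaluation of the action, the proposition is not proved. The paper closes this by taking $(a,b)=(0.15,0.2275)$ and computing $f(q)\approx 13.76572$ against the threshold $\approx 13.8647$.

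A second, related problem is quantitative: your expectation that the margin is ``comfortable rather than razor-thin'' is wrong, and this undermines your proposed estimation scheme. The margin for the paper's loop is about $0.099$, i.e.\ under one percent. Your plan is to upper-bound the potential by $\sum_{i<j}r_{ij}^{-1}$ via Lemma \ref{cot}; but the pointwise error of that bound is
\[
\frac{1}{r}-\cot d \;=\; \frac{1}{r}\left(1-\frac{1-r^2/2}{\sqrt{1-r^2/4}}\right)\;\approx\;\frac{3r}{8},
\]
which, summed over the three pairs and integrated over a period with separations of order $0.3$--$0.6$ for this loop, produces an overshoot of several tenths --- several times the available margin. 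The same is true, to a smaller degree, of your fallback bound $\cot d<\sqrt{1-r^2/4}\,/r$ (error $\approx r/4$ per pair). So replacing the potential by these majorants is likely to push your estimate above the threshold and the argument would fail; since the closed form $\cot(d(q_i,q_j))=\bigl(1-\tfrac{r_{ij}^2}{2}\bigr)\big/\bigl(r_{ij}\sqrt{1-\tfrac{r_{ij}^2}{4}}\bigr)$ is perfectly explicit, the potential should simply be evaluated exactly, as the paper (implicitly, via its numerical value) does. Lemma \ref{cot} is needed for the lower bound in Proposition \ref{p}, not for estimating the test loop.
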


\begin{proof}
Consider the test loop

\begin{equation}
q_1(t)=(x(t),y(t),z(t)), \ \ q_2(t)=q_1(t+1/3), \ \ q_3=q_1(t+2/3),\label{eq3}
\end{equation}
 where

\begin{equation}
\begin{split}
x(t)=&0.15 \sin(4 \pi t),\\
y(t)=&0.2275 \sin(2 \pi t),\\
z(t)=&\sqrt{1-x^2(t)-y^2(t)}. \nonumber
\end{split}
\end{equation}

In \cite{Zhang1} the authors show that if $\sin(2 \pi t)=\sin(2 \pi (t+\frac{i-1}{3}))$, then $\sin(4 \pi t)\neq \sin(4 \pi (t+\frac{i-1}{3}))$, for $t \in (0,1)$, $i=2,3$. Hence $q_i(t)\neq q_j(t)$, $i \neq j$.

With the expressions (\ref{eq3}), we have  $f(q)\approx 13.76572<\frac{3}{2}(12 \pi )^{2/3}-3\approx 13.8647$.

\end{proof}
%

\subsection*{Acknowledgements} The first author has been partially supported by {\it The 2017's Plan of Foreign Cultural and Educational Experts Recruitment for the Universities Under the Direct Supervision of Ministry of Education of China} (Grant WQ2017SCDX045).

\end{document}